\documentclass[11pt]{amsart}

\usepackage[T1]{fontenc}
\usepackage{amsmath,amssymb,amsthm,mathtools,mathrsfs}
\usepackage{graphicx}
\usepackage[margin=1.15in]{geometry}
\usepackage{microtype}
\usepackage[colorlinks=true,linkcolor=blue,citecolor=blue,urlcolor=blue]{hyperref}
\newtheorem{theorem}{Theorem}[section]
\newtheorem{proposition}[theorem]{Proposition}
\newtheorem{lemma}[theorem]{Lemma}

\theoremstyle{definition}

\newtheorem{notations}[theorem]{Notations}
\theoremstyle{remark}

\newcommand{\CG}{\mathbb{C}G}
\newcommand{\RG}{RG}
\newcommand{\cH}{\mathcal{H}}
\newcommand{\tr}{\operatorname{Tr}}
\DeclareRobustCommand{\smallscript}[1]{\scalebox{.8}{$\scriptstyle #1$}}
\DeclareRobustCommand{\stageindex}[1]{\scalebox{.8}{$\scriptstyle #1$}}
\DeclareRobustCommand{\twosup}{\scalebox{.8}{$\scriptstyle(2)$}}

\title{Vanishing of $\ell^2$-Betti numbers for inner amenable groups}
\author{Robin D. Tucker-Drob}
\address{Department of Mathematics, University of Florida, Gainesville, FL 32611, USA}
\email{rtuckerd@gmail.com}
\date{}

\begin{document}

\begin{abstract}
We prove that every countable inner amenable group has vanishing $\ell^2$-Betti numbers in all degrees.
This was previously open in every degree $n\geq2$.
\end{abstract}

\maketitle

\tableofcontents

\section{Introduction}

Cheeger and Gromov proved that a countably infinite amenable group has vanishing $\ell^2$-Betti numbers in all degrees \cite{CG}.
Here we generalize this to \emph{inner amenable} groups,\footnote{Our definition is equivalent to the existence of a \emph{diffuse} conjugation-invariant mean, a standard modern convention \cite{TD}.
For ICC groups, it agrees with Effros's original definition \cite{Effros}.} i.e., countable groups $G$ admitting a sequence $(F_k)_{k\geq0}$ of nonempty finite subsets that is
\begin{itemize}
\item \emph{asymptotically diffuse}: $|F_k|\to\infty$, and
\item \emph{asymptotically conjugation invariant}: for every $g\in G$,
\[
\frac{|gF_kg^{-1}\mathbin{\triangle}F_k|}{|F_k|} \xrightarrow{\, k\to\infty\, } 0 .
\]
\end{itemize}

\begin{theorem}\label{thm:main}
Let $G$ be a countable inner amenable group.
Then $\beta_n^{\twosup}(G)=0$ for every $n\geq0$.
\end{theorem}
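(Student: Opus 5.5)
We argue by Cheeger--Gromov style averaging. Work with Lück's dimension function $\dim_{LG}$ over the group von Neumann algebra $LG$. Then $\beta_n^{\twosup}(G)=\dim_{LG} H_n(G;LG)$, where $H_n(G;LG)=\operatorname{Tor}_n^{\CG}(LG,\mathbb{C})$. The case $n=0$ is easy: inner amenability with $|F_k|\to\infty$ forces $G$ to be infinite, and $\beta_0^{\twosup}(G)=0$ exactly when $G$ is infinite. So fix $n\geq1$ and a free resolution $P_\ast\to\mathbb{C}$ by $\CG$-modules. We take the standard bar resolution, since it carries a canonical $G\times G$-equivariant structure that lets conjugation act.

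The key structural idea is to replace $G$ by the diagonal-type action of $G$ on itself by conjugation. Concretely, $LG\otimes_{\CG}P_\ast$ can be rewritten as a chain complex of $LG$-modules on which $G$ also acts by conjugation, $g\cdot x=gxg^{-1}$. Inner conjugation acts trivially on homology, because $H_n(G;LG)$ is a functor of $G$ and inner automorphisms induce the identity. Now suppose $\xi\in H_n(G;LG)$ spans a submodule of positive dimension. Use the finite sets $F_k$ to form the averaging operators
\[
A_k=\frac{1}{|F_k|}\sum_{h\in F_k}\lambda(h)\,(\text{conjugation by }h^{-1}),
\]
acting on cycles. This is the analogue of the Følner averaging in the amenable case. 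Asymptotic conjugation invariance makes $A_k$ almost commute with the $G$-action. Asymptotic diffuseness, $|F_k|\to\infty$, should make $\tfrac{1}{|F_k|}\sum_{h\in F_k}\lambda(h)$ tend to zero in trace, that is, $\tau$ of its square tends to zero.

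The plan is then to show that every class is homologous to one of arbitrarily small dimension. This uses Lück's local criterion: a finitely generated submodule $M\subseteq H_n$ has $\dim M=0$ if, for every $\epsilon>0$, every cycle representing it is homologous modulo a boundary to something supported in a projection of trace less than $\epsilon$. Because $H_n$ can be written as a colimit of images of $H_n(K;LG)$ over finitely generated pieces, and by additivity and cofinality of Lück's dimension, it suffices to treat cycles involving finitely many group elements $S\subset G$.

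For such a cycle $c$, average $c$ over conjugates $hch^{-1}$ with $h\in F_k$. Each $hch^{-1}$ is homologous to $c$ via an explicit chain homotopy, the standard one realizing conjugation on the bar resolution. The average is homologous to $c$, so $c$ is homologous to a cycle whose coefficients lie in the range of the operator above. Almost invariance of $F_k$ under conjugation by the finite set $S$ controls the error term. That error lies in a module whose dimension is bounded by $C_S\,|sF_ks^{-1}\triangle F_k|/|F_k|$, which tends to zero. Diffuseness makes the dimension of the remaining term small.

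I expect the main obstacle to be the homotopy bookkeeping in higher degrees. One needs the chain homotopies between $c$ and $hch^{-1}$ to have norms or supports controlled uniformly in $h$. Only then can the averaged homotopy be bounded and the dimension estimate survive; for $n=1$ this is automatic, but for $n\geq2$ the homotopies grow in length. To handle this, I would first try passing to the ultrapower or the Hilbert-space version. Concretely, use reduced $\ell^2$-cohomology of $G$ with coefficients in $\ell^2G$, and show that harmonic $n$-cocycles are invariant under the unitary $\mathrm{Ad}$-action yet orthogonal to almost invariant vectors. In that setting the Cheeger--Gromov argument reduces to proving the spectral gap failure of conjugation on $\ell^2G\ominus\mathbb{C}$ restricted to cocycles.
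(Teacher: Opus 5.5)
There is a genuine gap. It lies in the step that turns ``$c$ is homologous to its average'' into a dimension bound. You argue that because $a_k=|F_k|^{-1}\sum_{h\in F_k}\lambda(h)$ satisfies $\tau(a_k^*a_k)=1/|F_k|\to0$, the averaged cycle, whose coefficients lie in the range of $a_k$, has small dimension. But $\tau(a_k^*a_k)$ is a squared $\ell^2$-norm, whereas $\dim_{LG}$ sees traces of support projections. For $G=\mathbb{Z}$ and $F_k=\{0,\dots,k-1\}$, $a_k$ is a Dirichlet kernel, nonzero almost everywhere on the circle, so $LG\,a_k\cong LG$ has dimension $1$ for every $k$. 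Your ``local criterion'' is fine if ``supported in a projection of trace $<\epsilon$'' is taken literally. Averaging, however, only delivers norm smallness, and Proposition~\ref{prop:finite-stage-dimension} is blind to norms: $\dim_G\overline{\operatorname{im}}(T)$ is unchanged if $T$ is replaced by $\epsilon T$. For the same reason the factor $1/|F_k|$ in your error bound $C_S|sF_ks^{-1}\triangle F_k|/|F_k|$ is unjustified, since dividing by $|F_k|$ rescales norms, not dimensions.

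Two smaller points. First, the action $x\otimes p\mapsto gxg^{-1}\otimes gp$ on $LG\otimes_{\CG}P_*$ is just left multiplication by $\lambda(g)$, and that is \emph{not} trivial on $H_n(G;LG)$; otherwise every infinite group would have vanishing $\ell^2$-Betti numbers. What is homotopically trivial is right translation on the bar resolution, which is presumably what your $A_k$ averages. Second, the obstacle you flag is not the real one. The relevant homotopies (prism operators) have norm at most $n+1$ uniformly in $h$. Their growing supports are absorbed by passing to a later stage $X_j$, which is legitimate because $F$ is finite.

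The paper gets the factor $1/|F|$ not by averaging cycles but by working with $|F|$ twisted copies at once.
\begin{itemize}
\item The window module $\mathscr{W}_n(F,X_i)\subseteq\ell^2(G)\otimes C_n^{\twosup}(X_i)$, with $G$ acting by conjugation on $\ell^2(G)$, has $\dim_G=|F|c_n(i)$. It satisfies $\tau_n(p_j)=|F|^{-1}\tr_G(q_n^{F}(1\otimes p_j)q_n^{F})$.
\item The homotopy used goes from the identity to $\mathsf{L}_n$, which is left translation by $h$ on the $\delta_h$-fiber. Unlike right translation, this preserves the left-invariant stages.
\item The windowed prism identity shows that surviving projections of window cycles are \emph{exactly} $\mathsf{L}_n$-invariant. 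A vector $\delta_h\otimes c$ is $\mathsf{L}_n$-invariant only if $h.c=c$, which forces $c=0$ when $h$ has infinite order.
\end{itemize}
This exact invariance, not smallness of $\tau(a_F^*a_F)$, is what kills dimension. The non-invariant part of $\overline{\operatorname{im}}(q_n^{F}(1\otimes p_j))$ injects under $1\otimes\partial_n^{X_i}$ into $\mathscr{W}_{n-1}(\partial_S^{\mathrm{conj}}F,X_i)$, which gives the boundary term. The invariant (torsion) part has trace $\sum_{h\in F}\tau_n(p_je_h)$, with $e_h=0$ for $h$ of infinite order. That term is controlled by the self-improving inequality $\clubsuit$, using $e_h\le(1+\operatorname{Re}\rho_h)/2$ and Cauchy--Schwarz against the projection $p_j$. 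Only there does $\tau_n(a_F^*a_F)=c_n/|F|$ enter, and only as a bound on a linear functional of a projection.

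Your closing suggestion about Ad-invariant harmonic cocycles and failure of spectral gap names no mechanism that produces a dimension bound, so it does not close the gap.
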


The degree-one case of Theorem~\ref{thm:main} was proved by Chifan, Sinclair, and Udrea \cite[Corollary~D]{CSU}, and independently by Ozawa in unpublished work.
Alternative proofs were later given in \cite[Corollary 6]{TD};
see also \cite{BIP,Ding,Uschold} for related results in degree one. 
Ozawa raised the higher-degree question after a talk by the author at the 2014 UCLA workshop \emph{von Neumann algebras and ergodic theory}.
Unlike the previously known proofs of the degree-one case, which separate the amenable and nonamenable cases, the proof of Theorem~\ref{thm:main} is uniform.

The proof is organized around the Cheeger--Gromov finite-stage dimension formula (Proposition~\ref{prop:finite-stage-dimension}), applied to an exhaustion $(X_i)_{i=0}^\infty$ of the homogeneous bar construction $E_\bullet G$ by $G$-finite simplicial subsets.
Central to the argument are the prism operators (\S\ref{subsec:prism-ch}) associated to left translation, which assemble into an equivariant chain homotopy from the identity map to diagonal left translation $\mathsf{L}_*$ on the algebraic (pre-Hilbert) conjugation complex of $E_{\bullet}G$ (\S\ref{subsec:conj-comp}).
At the finite-stage $\ell^2$ level, only stage-straddling fragments of this homotopy are visible, realized on window modules $\mathscr{W}_n(F,X_i)$ associated to finite subsets $F$ of $G$ (\S\ref{subsec:window-modules}).
After we decompose the relevant window-module compression into complementary $\mathsf{L}_n$-fixed and nonfixed parts, these fragments yield a quantitative dimension bound: 
the fixed part, which records torsion in $G$, has trace computed by the window-module trace formulas (\S\ref{subsec:window-traces}), while the trace of the nonfixed part is bounded in terms of the conjugation boundary of $F$ (Proposition~\ref{prop:finite-stage-rank-estimate}).

The argument through \S4 applies to every countable group; inner amenability enters only in the short \S5, where we combine it with the dimension bound to prove Theorem~\ref{thm:main}.

\medskip

\noindent{\bf Acknowledgments.}
I would like to thank Ronnie Chen for giving a series of four talks in the University of Florida $\ell^2$-(co)homology learning seminar in Fall 2025, culminating in a proof of the Cheeger--Gromov theorem.

This work was partially supported by NSF grant DMS-2246684.

\medskip

\noindent{\bf Disclosure of generative-AI tool use.}
In preparing this article, I used Claude Opus 5, Fable 5, and GPT-5.6 Sol Ultra for exploratory mathematical conversations and editorial feedback.
Before using these tools, I had developed a degree-two argument using the prism operators under an additional technical hypothesis.
Subsequent discussions with GPT-5.6 Sol Ultra quickly led to the removal of that hypothesis and the extension to all degrees.
I am responsible for the final argument and exposition, including any errors.

\section{Preliminaries}

Throughout the article, $G$ is an arbitrary countable group.
By an action of $G$ we mean a left action.
All chain complexes and Hilbert spaces that we consider are over $\mathbb{C}$.
For every chain complex $C_*$ considered below, we set $C_n\coloneqq0$ for $n\leq-1$.
For an operator $T$, we write $\overline{\operatorname{im}}(T)$ for the closure of the image of $T$.

\subsection{Hilbert $G$-modules and dimension}

Let $\{ \delta_g : g\in G \}\subseteq\CG\subseteq\ell^2(G)$ be the standard orthonormal basis for $\ell^2(G)$,
and let $\lambda$ and $\rho$ denote the left and right regular representations of $G$ on $\ell^2(G)$, so that $\lambda_g\delta_h\coloneqq\delta_{gh}$ and $\rho_g\delta_h\coloneqq\delta_{hg^{-1}}$ for all $g,h\in G$.
Let $\RG\coloneqq\rho(G)''=\lambda(G)'$ be the right group von Neumann algebra of $G$.

For each $r\geq0$, equip $\ell^2(G)^r$ with the representation $\lambda^{\oplus r}$.
We regard elements of $\ell^2(G)^r$ as column vectors.
For $r,s\geq0$, the bounded $G$-equivariant maps from $\ell^2(G)^s$ to $\ell^2(G)^r$ are precisely the maps $\xi\mapsto A\xi$, where $A\in M_{r\times s}(\RG)$.
For $r\geq0$ and $b\in\RG$, we also use $b$ to denote the diagonal operator $\operatorname{diag}_r(b)\in M_r(\RG)$ on $\ell^2(G)^r$.
Thus, for $A=(a_{u,v})\in M_{r\times s}(\RG)$ and $b\in\RG$, we write $Ab\coloneqq A\operatorname{diag}_s(b)=(a_{u,v}b)_{u,v}$.

A \emph{Hilbert $G$-module} is a Hilbert space with a unitary representation of $G$ that admits a $G$-equivariant linear isometric embedding into $\mathscr{K}\otimes\ell^2(G)$ for some Hilbert space $\mathscr{K}$, where $G$ acts trivially on $\mathscr{K}$ and by $\lambda$ on $\ell^2(G)$.
It is \emph{finitely generated} if $\mathscr{K}$ can be chosen finite-dimensional,
i.e., if the module is $G$-equivariantly unitarily equivalent to a closed $G$-invariant subspace of $\ell^2(G)^r$ for some $r\geq0$.
A morphism of Hilbert $G$-modules is a bounded $G$-equivariant operator.

Let $\tau$ be the canonical trace on $\RG$, i.e., $\tau(a)\coloneqq\langle a\delta_{1_G},\delta_{1_G}\rangle$ for $a\in\RG$.
For a closed $G$-invariant subspace $W\subseteq\ell^2(G)^r$, let $p_W\in M_r(\RG)$ denote the orthogonal projection onto $W$.
For a positive $G$-equivariant bounded operator $B$ on $W$, let $\widetilde{B}\in M_r(\RG)$ be its extension by zero on $W^\perp$ and define
\[
\begin{aligned}
\tr_G B &\coloneqq (\tr_{M_r(\mathbb{C})}\otimes\tau)(\widetilde B),\\
\dim_G W &\coloneqq \tr_G 1_W =(\tr_{M_r(\mathbb{C})}\otimes\tau)(p_W).
\end{aligned}
\]
More generally, suppose that $\mathscr{H}$ is a finitely generated Hilbert $G$-module,
as witnessed by a $G$-equivariant unitary $U:\mathscr{H}\to W$ onto a closed $G$-invariant subspace $W\subseteq\ell^2(G)^r$ for some $r\geq0$.
Then we define $\dim_G \mathscr{H}\coloneqq\dim_G W$ and $\tr_G B\coloneqq\tr_G(UBU^*)$ for each positive $G$-equivariant bounded operator $B$ on $\mathscr{H}$.
The resulting values do not depend on the choices of $r$, $W$, and $U$ \cite[Definitions~1.8 and~1.10]{Luck}.

\subsection{The homogeneous bar construction}

Let $E_\bullet G$ be the simplicial $G$-set whose set of $n$-simplices is $E_nG\coloneqq G^{n+1}$.
The group $G$ acts on $E_nG$ by $h.(g_0,\dots,g_n)\coloneqq(hg_0,\dots,hg_n)$.
For $n\geq1$ and $0\leq r\leq n$, the $r$-th face map $\partial_{n,r}:E_nG\to E_{n-1}G$ is given by deletion:
\[
\partial_{n,r}(g_0,\dots,g_n) \coloneqq (g_0,\dots,\widehat{g_r},\dots,g_n).
\]
For $n\geq0$ and $0\leq r\leq n$, the $r$-th degeneracy map $s_{n,r}:E_nG\to E_{n+1}G$ is given by repetition:
\[
s_{n,r}(g_0,\dots,g_n) \coloneqq (g_0,\dots,g_r,g_r,\dots,g_n).
\]
On the underlying simplicial set, prepending $1_G$ induces a simplicial homotopy between the identity map and the constant map at the vertex $1_G$, and hence the geometric realization of $E_\bullet G$ is contractible.
See \cite[Chapter~III, Lemma~5.1 and Example~5.2, p.~190]{GJ}.

An $n$-simplex is \emph{degenerate} if it belongs to the image of $s_{n-1,r}$ for some $0\leq r<n$.
For $n\geq0$, define $C_n(E_\bullet G)$ to be the quotient of the free complex vector space $\mathbb{C}[E_nG]$ by the subspace spanned by the degenerate $n$-simplices.
For $(g_0,\dots,g_n)\in E_nG\subseteq\mathbb{C}[E_nG]$, let $[g_0,\dots,g_n]\in C_n(E_\bullet G)$ denote its image under the quotient map.
For each $n\geq0$, let
\[
\mathscr{B}_n \coloneqq \bigl\{ [g_0,\dots,g_n] : (g_0,\dots,g_n)\in G^{n+1} \text{ and } g_r\neq g_{r+1}\text{ for }0\leq r<n \bigr\}.
\]
Then $\mathscr{B}_n$ is a basis of $C_n(E_\bullet G)$, and the induced linear action of $G$ permutes $\mathscr{B}_n$.
The action of $G$ on $\mathscr{B}_n$ is free, and the set
\[
\Sigma_n(E_\bullet G) \coloneqq \bigl\{ [g_0,\dots,g_n]\in\mathscr{B}_n : g_0=1_G \bigr\}
\]
contains exactly one element from each $G$-orbit in $\mathscr{B}_n$.

For $n\geq1$, the $\CG$-linear boundary map $\partial_n^{\mathrm{alg}}:C_n(E_\bullet G)\to C_{n-1}(E_\bullet G)$ is given by
\[
\partial_n^{\mathrm{alg}}[g_0,\dots,g_n] \coloneqq \sum_{r=0}^n(-1)^r [g_0,\dots,\widehat{g_r},\dots,g_n]
\]
for $[g_0,\dots,g_n]\in\mathscr{B}_n$.
Define $\partial_0^{\mathrm{alg}}:C_0(E_\bullet G)\to C_{\text{-}1}(E_\bullet G)=0$ to be the zero map.
The resulting chain complex $C_*(E_\bullet G)$ is called the \emph{normalized chain complex} of the simplicial set $E_\bullet G$.
The augmentation $\varepsilon:C_0(E_\bullet G)\to\mathbb{C}$ given by $\varepsilon[g_0]\coloneqq1$ exhibits $C_*(E_\bullet G)$ as a free resolution of the trivial $\CG$-module $\mathbb{C}$, called the \emph{homogeneous bar resolution} \cite[Chapter~VI.13, pp.~214--216]{HS}.

For each $n\geq 0$, equip $C_n(E_\bullet G)$ with the inner product for which $\mathscr{B}_n$ is an orthonormal basis,
and let $C_n^{\twosup}(E_\bullet G)$ be its Hilbert completion, which is naturally a Hilbert $G$-module.
While these Hilbert completions are well-defined, none of the algebraic boundary maps $\partial_n^{\mathrm{alg}}$ for $n\geq1$ extends to a bounded operator between them when $G$ is infinite.
Restricting to the stages of an exhaustion by $G$-finite simplicial subsets of $E_\bullet G$ ameliorates this pathology and leads to the finite-stage dimension formula of Proposition \ref{prop:finite-stage-dimension} below.

\subsection{\texorpdfstring{$G$-finite stages and $\ell^2$-homology}{G-finite stages and L2-homology}}\label{subsec:finite-stage-complexes}

A $G$-invariant simplicial subset $X\subseteq E_\bullet G$ is \emph{$G$-finite} if it has finitely many $G$-orbits of nondegenerate simplices.
Since $G$ is countable we can, and shall, fix once and for all an increasing exhaustion
\[
X_0\subseteq X_1\subseteq\cdots\subseteq E_\bullet G, \qquad E_\bullet G=\bigcup_{i\geq0}X_i
\]
of $E_\bullet G$ by $G$-finite simplicial subsets.
For each $i$, let $C_*(X_i)\subseteq C_*(E_\bullet G)$ denote the normalized chain complex of $X_i$.
Then $\mathscr{B}_n(X_i)\coloneqq\mathscr{B}_n\cap C_n(X_i)$ is a basis of $C_n(X_i)$.
Let $\Sigma_n(i)\coloneqq\mathscr{B}_n(X_i)\cap\Sigma_n(E_\bullet G)$ and let $c_n(i)\coloneqq|\Sigma_n(i)|<\infty$ denote its cardinality.
For $n<0$, set $\Sigma_n(i)\coloneqq\varnothing$ and $c_n(i)\coloneqq0$.

Let $C_n^{\twosup}(X_i)\subseteq C_n^{\twosup}(E_\bullet G)$ be the Hilbert $G$-module of square-summable simplicial $n$-chains on $X_i$, i.e., the closed linear span of $\mathscr{B}_n(X_i)$.
For each $n\geq0$, we fix an injective enumeration of $\Sigma_n(E_\bullet G)$ for which each $\Sigma_n(i)$ is an initial segment, thereby inducing a $G$-equivariant unitary from $C_n^{\twosup}(X_i)$ onto $\ell^2(G)^{\smallscript{c_n(i)}}$ for each $i\geq0$.
We henceforth use these unitaries to identify $C_n^{\twosup}(X_i)$ and $\ell^2(G)^{\smallscript{c_n(i)}}$ as Hilbert $G$-modules.

For $n\geq0$, the restriction $\partial_n^{\mathrm{alg}}|_{\stageindex{C_n(X_i)}}$ extends to a bounded operator $\partial_n^{\stageindex{X_i}}:C_n^{\twosup}(X_i)\to C_{n-1}^{\twosup}(X_i)$.
These operators are matrices over $\rho(\CG)\subseteq\RG$.
The resulting complex
\[
\cdots \to C_2^{\twosup}(X_i) \xrightarrow{\,\partial_2^{\stageindex{X_i}}\,} C_1^{\twosup}(X_i) \xrightarrow{\,\partial_1^{\stageindex{X_i}}\,} C_0^{\twosup}(X_i) \to 0
\]
is a finitely generated Hilbert $G$-chain complex, i.e., a chain complex in the category of finitely generated Hilbert $G$-modules.
The reduced homology of this complex is called the \emph{reduced $\ell^2$-homology of $X_i$}.
For every $n\geq0$, the \emph{space of harmonic $n$-chains} is
\[
\cH_n(X_i) \coloneqq \ker(\partial_n^{\stageindex{X_i}})\cap\ker((\partial_{n+1}^{\stageindex{X_i}})^*).
\]
We henceforth identify $\cH_*(X_i)$ with this reduced $\ell^2$-homology via the $G$-equivariant unitary sending each harmonic chain to its reduced homology class \cite[Lemma~1.18]{Luck}.

\subsection{The finite-stage dimension formula}

We refer to \cite{CG,Luck} for the definition and basic theory of $\ell^2$-Betti numbers.
For our purposes, the relevant characterization is given as follows.

For $j\geq i$, let $T_n^{j,i}:\cH_n(X_i)\to\cH_n(X_j)$ be the bounded $G$-equivariant map induced on reduced $\ell^2$-homology by the inclusion $X_i\subseteq X_j$.
Then $T_n^{k,i}=T_n^{k,j}T_n^{j,i}$ whenever $k\geq j\geq i$.
Define
\[
V_n^j(X_i) \coloneqq \overline{\operatorname{im}}((T_n^{j,i})^*) = \cH_n(X_i)\cap\ker(T_n^{j,i})^\perp \subseteq \cH_n(X_i).
\]
Thus, $V_n^j(X_i)$ is the space of stage-$i$ harmonic $n$-chains that survive completely through stage $j$.

\begin{proposition}[Cheeger--Gromov finite-stage formula {\cite[p.~198]{CG}}]\label{prop:finite-stage-dimension}
For every $n\geq0$,
\[
\beta_n^{\twosup}(G) = \sup_{i\geq0}\inf_{j\geq i} \dim_G V_n^j(X_i).
\]
\end{proposition}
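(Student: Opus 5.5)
The plan is to derive the formula from the definition of $\ell^2$-Betti numbers as von Neumann dimensions of the $\ell^2$-homology of $EG$. Following Cheeger--Gromov and L\"uck, $\beta_n^{\twosup}(G)$ is defined via the directed system of $G$-finite pieces of a model for $EG$. Concretely, one has
\[
\beta_n^{\twosup}(G)=\dim_{RG}\bigl(H_n^G(EG;RG)\bigr),
\]
the L\"uck extended dimension. By L\"uck's continuity of dimension under directed colimits, this equals
\[
\sup_i \inf_{j\geq i}\dim_G \operatorname{im}\bigl(H_n^{\twosup}(X_i)\to H_n^{\twosup}(X_j)\bigr),
\]
where $H_n^{\twosup}$ denotes reduced $\ell^2$-homology. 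First, I would note that the exhaustion $(X_i)$ of the contractible free $G$-simplicial set $E_\bullet G$ is a legitimate model for computing these numbers. Its geometric realization is a free contractible $G$-CW complex, hence a model of $EG$. The $X_i$ are $G$-finite subcomplexes exhausting it, and independence of the chosen exhaustion and model is part of the cited theory.

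Second, I would identify the image term with $\dim_G V_n^j(X_i)$. The map $T_n^{j,i}\colon\cH_n(X_i)\to\cH_n(X_j)$ is a morphism of finitely generated Hilbert $G$-modules. By the polar decomposition, which is $G$-equivariant because $T_n^{j,i}$ commutes with the $G$-action, $\overline{\operatorname{im}}(T_n^{j,i})$ is unitarily equivalent to $\ker(T_n^{j,i})^\perp=\overline{\operatorname{im}}((T_n^{j,i})^*)$. Hence
\[
\dim_G\overline{\operatorname{im}}(T_n^{j,i})=\dim_G V_n^j(X_i).
\]
Under the identification of reduced homology with harmonic chains (L\"uck, Lemma 1.18), this is exactly the dimension of the image of stage-$i$ reduced homology in stage $j$. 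The quantity $\dim_G V_n^j(X_i)$ is nonincreasing in $j$, by the factorization $T^{k,i}=T^{k,j}T^{j,i}$ and the resulting inclusion $\ker T^{j,i}\subseteq\ker T^{k,i}$. It is also nondecreasing in $i$ once the infimum over $j$ is taken. So the $\sup\inf$ is in fact a monotone limit.

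The main obstacle is the comparison between the algebraic colimit dimension and this Hilbert-space image formula. One must show that $\dim_{RG}$ of the image of $H_n(C_*(X_i)\otimes RG)\to H_n(C_*(X_j)\otimes RG)$ agrees with $\dim_G$ of the closure of the image in reduced $\ell^2$-homology, before passing to $\inf_j$ and $\sup_i$. I would take the route through L\"uck's equivalence between finitely generated Hilbert $G$-modules and finitely generated projective $RG$-modules, under which weak isomorphisms preserve dimension. Additivity then handles the discrepancy between algebraic and reduced homology, since the torsion part of $H_n(C_*(X_i)\otimes RG)$ has dimension $0$. I would then take Cheeger--Gromov's original statement on p.~198 as the final justification, since their definition of $\beta_n^{\twosup}$ is precisely this $\sup_i\inf_j$ of image dimensions for an exhaustion of $EG$.
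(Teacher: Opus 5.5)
Your proposal is correct and follows the same route as the paper. You realize $E_\bullet G$ as a free contractible $G$-CW model exhausted by the $G$-cocompact realizations $|X_i|$, and invoke L\"uck's exhaustion formula; the paper cites equations (6.55) and (6.59) of \cite{Luck}, which contain the algebraic-versus-reduced comparison you flag as the main obstacle. This gives $\sup_i\inf_{j\geq i}\dim_G\overline{\operatorname{im}}(T_n^{j,i})$, and you then use the $G$-equivariant polar decomposition to replace $\overline{\operatorname{im}}(T_n^{j,i})$ by $\overline{\operatorname{im}}((T_n^{j,i})^*)=V_n^j(X_i)$. The one step you leave implicit, which the paper states explicitly with a citation to \cite{May}, is that the cellular chain complex of $|X_i|$ is naturally isomorphic to the normalized complex $C_*(X_i)$; this is what makes the paper's $\ell^2$-complexes the cellular ones to which L\"uck's formula applies.
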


\begin{proof}
The geometric realization of $E_\bullet G$ is a countable contractible free $G$-CW complex, and the realizations of the $X_i$ form an exhaustion by $G$-subcomplexes, each with compact quotient.
For each $i$, the cellular chain complex of the realization of $X_i$ is naturally isomorphic to $C_*(X_i)$ \cite[Chapter~16, \S2--4, pp.~124--127]{May}.
Since the action is free, applying \cite[equations~(6.55) and~(6.59), pp.~268--269]{Luck} yields
\[
\beta_n^{\twosup}(G) = \sup_{i\geq0}\inf_{j\geq i} \dim_G\overline{\operatorname{im}}(T_n^{j,i}).
\]
For every $j\geq i$, the partial isometry in the polar decomposition of $T_n^{j,i}$ restricts to a $G$-equivariant unitary from $\overline{\operatorname{im}}((T_n^{j,i})^*)$ onto $\overline{\operatorname{im}}(T_n^{j,i})$.
Thus, $\dim_G\overline{\operatorname{im}}(T_n^{j,i})=\dim_G V_n^j(X_i)$, which proves the proposition.
\end{proof}

\section{Prism operators and window modules}

\subsection{The prism chain homotopy}\label{subsec:prism-ch}

For $n\geq0$, the degree-$n$ \emph{prism operator} $\mathsf{P}_n^h:C_n(E_\bullet G)\to C_{n+1}(E_\bullet G)$ associated to $h\in G$ is defined by
\[
\mathsf{P}_n^h[g_0,\dots,g_n] \coloneqq \sum_{r=0}^n(-1)^r [g_0,\dots,g_r,hg_r,hg_{r+1},\dots,hg_n].
\]
Define also $\mathsf{P}_{\text{-}1}^h:0\to C_0(E_\bullet G)$ to be the zero map.
These operators satisfy the \emph{prism identity}
\begin{equation}\label{eq:bar-prism}
\partial_{n+1}^{\mathrm{alg}}\mathsf{P}_n^hc+\mathsf{P}_{n-1}^h\partial_n^{\mathrm{alg}}c=h.c-c
\end{equation}
for $n\geq0$, $h\in G$, and $c\in C_n(E_\bullet G)$.
The family $\mathsf{P}_*^h$ is the chain homotopy associated to the simplicial homotopy from the identity map on $E_\bullet G$ to left translation by $h$;
see \cite[Proposition~5.3, p.~13]{MaySimplicial} for the general construction.
The prism operators satisfy the equivariance relation
\begin{equation}\label{eq:prism-equivariance}
\mathsf{P}_n^{ghg^{-1}}c=g.(\mathsf{P}_n^h(g^{-1}.c))
\end{equation}
for $n\geq0$, $g,h\in G$, and $c\in C_n(E_\bullet G)$.

Write $\otimes_{\mathrm{alg}}$ for the algebraic tensor product over $\mathbb{C}$. For $n\geq0$, define the linear maps
\[
\begin{alignedat}{2}
\mathsf{P}_n:\CG\otimes_{\mathrm{alg}}C_n(E_\bullet G)
&\longrightarrow
\CG\otimes_{\mathrm{alg}}C_{n+1}(E_\bullet G),\qquad&
\mathsf{P}_n(\delta_h\otimes c)
&\coloneqq
\delta_h\otimes\mathsf{P}_n^hc,\\
\mathllap{\text{and}\quad}\mathsf{L}_n:\CG\otimes_{\mathrm{alg}}C_n(E_\bullet G)
&\longrightarrow
\CG\otimes_{\mathrm{alg}}C_n(E_\bullet G),&
\mathsf{L}_n(\delta_h\otimes c)
&\coloneqq
\delta_h\otimes h.c
\end{alignedat}
\]
for $h\in G$ and $c\in C_n(E_\bullet G)$.
Let $\mathsf{P}_{\text{-}1}:0\to\CG\otimes_{\mathrm{alg}}C_0(E_\bullet G)$ be the zero map.
We call $\mathsf{L}_n$ the degree-$n$ \emph{diagonal left-translation operator}.
The prism identity \eqref{eq:bar-prism} gives the algebraic identity
\begin{equation}\label{eq:assembled-prism}
(1\otimes\partial_{n+1}^{\mathrm{alg}})\mathsf{P}_n + \mathsf{P}_{n-1}(1\otimes\partial_n^{\mathrm{alg}}) = \mathsf{L}_n-1
\end{equation}
on $\CG\otimes_{\mathrm{alg}}C_n(E_\bullet G)$, for each $n\geq0$.

Each summand in the definition of $\mathsf{P}_n^h$ extends to a partial isometry from $C_n^{\twosup}(E_\bullet G)$ to $C_{n+1}^{\twosup}(E_\bullet G)$.
Thus, $\lVert\mathsf{P}_n^h\rVert\leq n+1$ for every $h\in G$, so we extend $\mathsf{P}_n$ to a bounded operator from $\ell^2(G)\otimes C_n^{\twosup}(E_\bullet G)$ to $\ell^2(G)\otimes C_{n+1}^{\twosup}(E_\bullet G)$.
We also extend $\mathsf{L}_n$ to a block-diagonal unitary on $\ell^2(G)\otimes C_n^{\twosup}(E_\bullet G)$.

\subsection{Conjugation complexes}\label{subsec:conj-comp} 
For every $n\geq0$, equip $\ell^2(G)\otimes C_n^{\twosup}(E_\bullet G)$ with the tensor product of the conjugation representation on $\ell^2(G)$ and the given representation on
$C_n^{\twosup}(E_\bullet G)$, so that
\[ 
g.(\delta_h\otimes c) \coloneqq \delta_{ghg^{-1}}\otimes g.c
\]
for $g,h\in G$ and $c\in C_n^{\twosup}(E_\bullet G)$.

The operators $\mathsf{L}_n$ and $\mathsf{P}_n$ are then both $G$-equivariant; 
for $\mathsf{L}_n$ this is immediate, and for $\mathsf{P}_n$ this is precisely the equivariance relation \eqref{eq:prism-equivariance}.
Since each $X_i$ is $G$-invariant, $\mathsf{L}_n$ restricts to a unitary on $\ell^2(G)\otimes C_n^{\twosup}(X_i)$ for every $i\geq0$.

If $i,m,n\geq0$ and $A\in M_{c_m(i)\times c_n(i)}(\RG)$, then $1\otimes A:\ell^2(G)\otimes C_n^{\twosup}(X_i)\to\ell^2(G)\otimes C_m^{\twosup}(X_i)$ is $G$-equivariant and
\begin{equation}\label{eq:L-intertwining}
\mathsf{L}_m(1\otimes A) =(1\otimes A)\mathsf{L}_n.
\end{equation}

For each $i\geq0$ define the \emph{conjugation complex of $X_i$} to be the Hilbert $G$-chain complex
\[
\cdots
\to
\ell^2(G)\otimes C_2^{\twosup}(X_i)
\xrightarrow{\,1\otimes\partial_2^{\stageindex{X_i}}\,}
\ell^2(G)\otimes C_1^{\twosup}(X_i)
\xrightarrow{\,1\otimes\partial_1^{\stageindex{X_i}}\,}
\ell^2(G)\otimes C_0^{\twosup}(X_i)
\to
0.
\]
Its space of harmonic chains is $\ell^2(G)\otimes\cH_*(X_i)$, which we identify with the reduced homology of this complex.
Then, for $j\geq i\geq0$, the map on reduced homology induced by the inclusion $X_i\subseteq X_j$ is $1\otimes T_*^{j,i}$.
By \eqref{eq:L-intertwining}, $\mathsf{L}_*$ is a unitary chain automorphism of this complex.

\subsection{Window modules}\label{subsec:window-modules}

For $i\geq0$, $n\geq-1$, and $F\subseteq G$ finite, the degree-$n$
\emph{$F$-window module} at $X_i$ is defined by
\[
\mathscr{W}_n(F,X_i) \coloneqq \overline{\operatorname{span}} \bigl\{ \delta_{ghg^{-1}}\otimes g.\sigma : g\in G,\ h\in F,\ \sigma\in\Sigma_n(i) \bigr\} \subseteq \ell^2(G)\otimes C_n^{\twosup}(X_i).
\]
The elementary tensors $\delta_{ghg^{-1}}\otimes g.\sigma$ in the displayed set form an orthonormal basis of $\mathscr{W}_n(F,X_i)$.
The representation of $G$ freely permutes this basis, with orbit representatives $\delta_h\otimes\sigma$, where $h\in F$ and $\sigma\in\Sigma_n(i)$.
Thus, $\mathscr{W}_n(F,X_i)$ is $G$-equivariantly unitarily equivalent to $\ell^2(G)^{\smallscript{|F|c_n(i)}}$.
In particular,
\[
\dim_G \mathscr{W}_n(F,X_i) = |F|c_n(i).
\]

The window modules have three important properties which conspire in our favor:
(i) they permit explicit computation of $\tr_G$ (see \S\ref{subsec:window-traces});
(ii) they reflect the conjugation action of $G$ through the relation $(1\otimes\rho_g)\mathscr{W}_n(F,X_i)=\mathscr{W}_n(gFg^{-1},X_i)$ (applied critically in Lemma \ref{lem:cutoff-boundary});
and (iii) they realize the algebraic prism identity by bounded operators between finite stages, as the following proposition shows.

\begin{proposition}[windowed prism identity]\label{prop:prism-window}
Let $F\subseteq G$ be finite, let $n,i\geq0$, and set $X\coloneqq X_i$.
There exists $j\geq i$ such that, letting $Y\coloneqq X_j$, we have $\mathsf{P}_n(\mathscr{W}_n(F,X))\subseteq\ell^2(G)\otimes C_{n+1}^{\twosup}(Y)$, and
\begin{equation}\label{eq:prism-window}
(1\otimes\partial_{n+1}^{\stageindex{Y}})
\mathsf{P}_n + \mathsf{P}_{n-1}(1\otimes\partial_n^{\stageindex{X}}) = \mathsf{L}_n-1
\end{equation}
as bounded operators from $\mathscr{W}_n(F,X)$ into
$\ell^2(G)\otimes C_n^{\twosup}(Y)$.
\end{proposition}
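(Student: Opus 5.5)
The plan is to choose $Y$ large enough to contain every simplex that $\mathsf{P}_n$ produces from the finitely many orbit representatives $\delta_h\otimes\sigma$ of $\mathscr{W}_n(F,X)$. Then verify \eqref{eq:prism-window} on the orthonormal basis using the algebraic identity \eqref{eq:assembled-prism}, and extend by continuity.

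\emph{Choice of $j$.} For $h\in F$ and $\sigma\in\Sigma_n(i)$, the chain $\mathsf{P}_n^h\sigma$ is a finite combination of $(n+1)$-simplices. These are finitely many in total, because $F$ and $\Sigma_n(i)$ are finite. Each simplex of $E_\bullet G$ lies in some $X_k$, and the $X_k$ increase, so some $j\geq i$ has $X_j$ containing all of them. Since $X_j$ is a simplicial subset, it also contains their faces. Because $X_j$ is $G$-invariant, \eqref{eq:prism-equivariance} gives
\[
\mathsf{P}_n(\delta_{ghg^{-1}}\otimes g.\sigma)=\delta_{ghg^{-1}}\otimes g.(\mathsf{P}_n^h\sigma)\in\ell^2(G)\otimes C_{n+1}(Y).
\]
So $\mathsf{P}_n$ maps the basis of $\mathscr{W}_n(F,X)$ into the closed subspace $\ell^2(G)\otimes C_{n+1}^{\twosup}(Y)$. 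As $\mathsf{P}_n$ is bounded, the whole window module maps there.

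\emph{Verifying the identity.} I take a basis vector $w=\delta_{ghg^{-1}}\otimes g.\sigma$ and check each term.
\begin{itemize}
\item First term: $(1\otimes\partial^{Y}_{n+1})\mathsf{P}_nw$ equals $(1\otimes\partial^{\mathrm{alg}}_{n+1})\mathsf{P}_nw$, since $\partial^{Y}$ extends $\partial^{\mathrm{alg}}$ restricted to $C_{n+1}(Y)$.
\item Second term: $(1\otimes\partial^X_n)w=(1\otimes\partial^{\mathrm{alg}}_n)w$ is a finite combination of elements $\delta_{ghg^{-1}}\otimes\tau$ with $\tau\in C_{n-1}(X)$. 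Applying $\mathsf{P}_{n-1}$ gives the algebraic value.
\item Combined: by \eqref{eq:assembled-prism}, the left side of \eqref{eq:prism-window} evaluated at $w$ equals $\mathsf{L}_nw-w$. This vector lies in $\ell^2(G)\otimes C_n(X)\subseteq\ell^2(G)\otimes C_n^{\twosup}(Y)$, because $X$ is $G$-invariant.
\end{itemize}
For $n=0$, $\mathsf{P}_{-1}=0$ and $\partial_0=0$, so the same argument applies.

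\emph{Main obstacle: extension by continuity.} Both sides must be bounded operators from $\mathscr{W}_n(F,X)$ into $\ell^2(G)\otimes C_n^{\twosup}(Y)$; then agreement on the orthonormal basis gives the identity everywhere.
\begin{itemize}
\item $\mathsf{L}_n-1$ is bounded.
\item $(1\otimes\partial^Y_{n+1})\mathsf{P}_n$ is a composition of bounded maps, by the first step.
\item $\mathsf{P}_{n-1}(1\otimes\partial^X_n)$ is a composition of bounded maps: $1\otimes\partial_n^X$ is bounded, and $\mathsf{P}_{n-1}$ is bounded on $\ell^2(G)\otimes C_{n-1}^{\twosup}(E_\bullet G)$. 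Its image lands in $\ell^2(G)\otimes C_n^{\twosup}(Y)$ because the identity just proved on basis vectors forces $\mathsf{P}_{n-1}(1\otimes\partial^X_n)w=(\mathsf{L}_n-1)w-(1\otimes\partial^Y_{n+1})\mathsf{P}_nw$, and both terms on the right lie in $\ell^2(G)\otimes C_n^{\twosup}(Y)$, a closed subspace.
\end{itemize}

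Two points need care. First, the identity $(1\otimes\partial^X_n)w=(1\otimes\partial^{\mathrm{alg}}_n)w$ for basis vectors requires that the extension of $\mathsf{P}_{n-1}$ agree with the algebraic $\mathsf{P}_{n-1}$ on finite combinations of elementary tensors $\delta_{h'}\otimes c$ with $c$ finitely supported. This holds by the definition of the extension. Second, the operator $1\otimes\partial_n^X$ acts on the second tensor factor only, so it preserves the $\delta_{ghg^{-1}}$ label on which $\mathsf{P}_{n-1}$ depends.
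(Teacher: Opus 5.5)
Your proof is correct and is essentially the paper's argument. Both choose $j$ so that $\mathsf{P}_n^h\sigma\in C_{n+1}(X_j)$ for all $h\in F$ and $\sigma\in\Sigma_n(i)$, and get the inclusion from \eqref{eq:prism-equivariance} plus boundedness of $\mathsf{P}_n$. Both then verify \eqref{eq:prism-window} on the span of the basis vectors (which is exactly the paper's dense subspace $(\CG\otimes_{\mathrm{alg}}C_n(X))\cap\mathscr{W}_n(F,X)$) via \eqref{eq:assembled-prism}, extend by continuity, and conclude that the $\mathsf{P}_{n-1}$ term lands in $\ell^2(G)\otimes C_n^{\twosup}(Y)$ because the other two terms do.

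One minor slip: the first of your ``two points'' says that $(1\otimes\partial_n^X)w=(1\otimes\partial_n^{\mathrm{alg}})w$ depends on how $\mathsf{P}_{n-1}$ is extended. That equality does not involve $\mathsf{P}_{n-1}$; the fact you cite is what the next step (applying $\mathsf{P}_{n-1}$) needs, so nothing is lost.
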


\begin{proof}
Since $F$ and $\Sigma_n(i)$ are finite, we may choose $j\geq i$ so large that $\mathsf{P}_n^h\sigma\in C_{n+1}(X_j)$ for all $h\in F$, $\sigma\in\Sigma_n(i)$.
Let $Y\coloneqq X_j$.
Equivariance and continuity of $\mathsf{P}_n$ give
\[
\mathsf{P}_n(\mathscr{W}_n(F,X)) \subseteq \ell^2(G)\otimes C_{n+1}^{\twosup}(Y).
\]
Using this inclusion, we may restrict \eqref{eq:assembled-prism} to $(\CG\otimes_{\mathrm{alg}}C_n(X))\cap\mathscr{W}_n(F,X)$ to obtain \eqref{eq:prism-window} first on $(\CG\otimes_{\mathrm{alg}}C_n(X))\cap\mathscr{W}_n(F,X)$, and then on its closure $\mathscr{W}_n(F,X)$ by continuity.
The first term and the right-hand side of \eqref{eq:prism-window} take values in $\ell^2(G)\otimes C_n^{\twosup}(Y)$, so the second term does as well.
\end{proof}

\subsection{Trace formulas on window modules}\label{subsec:window-traces}

Let $i\geq0$, $n\geq-1$, and $F\subseteq G$ be finite.
Define
\[
q_n^{\smallscript{F},\stageindex{X_i}}: \ell^2(G)\otimes C_n^{\twosup}(X_i) \longrightarrow \mathscr{W}_n(F,X_i)
\]
to be the orthogonal projection onto $\mathscr{W}_n(F,X_i)$.
For $n\geq0$, the unitary $\mathsf{L}_n$ maps $\mathscr{W}_n(F,X_i)$ onto itself, hence $\mathsf{L}_n$ commutes with $q_n^{\smallscript{F},\stageindex{X_i}}$.

For $n\geq0$, let $\mathsf{E}_n$ be the orthogonal projection of $\ell^2(G)\otimes C_n^{\twosup}(E_\bullet G)$ onto the subspace $\ker(\mathsf{L}_n-1)$ of $\mathsf{L}_n$-invariant vectors.
For $h\in G$, let $e_h\in\RG$ be the orthogonal projection onto the subspace of $\rho_h$-invariant vectors. 
Thus, $e_h=0$ when $h$ has infinite order, while if $h$ has finite order $m$ then $e_h=\frac{1}{m}\sum_{k=0}^{m-1}\rho_{h^k}$, so that $\tau(e_h)=1/m$.

For $h\in G$ and $\sigma\in\Sigma_n(E_\bullet G)$, on the subspace $\delta_h\otimes \overline{\operatorname{span}}\{h^k.\sigma:k\in\mathbb Z\}$, the operators $\mathsf{L}_n$ and $1\otimes\rho_{h^{-1}}$ restrict to the same unitary.
Since $e_{h^{-1}}=e_h$ it follows that
\begin{equation}\label{eqn:E_ne_h}
\mathsf{E}_n(\delta_h\otimes\sigma) = \delta_h\otimes e_h\sigma 
\end{equation}
for all $h\in G$ and $\sigma\in\Sigma_n(E_\bullet G)$. 
For $i,n\geq0$ and $F\subseteq G$ finite, the operator $\mathsf{E}_n$ commutes with $q_n^{\smallscript{F},\stageindex{X_i}}$, so $\mathsf{E}_nq_n^{\smallscript{F},\stageindex{X_i}}$ is the orthogonal projection onto $\mathscr{W}_n(F,X_i)\cap\ker(\mathsf{L}_n-1)$.

For $i,n\geq0$ and a nonempty finite set $F\subseteq G$, the $G$-equivariant unitary determined by the orbit representatives $\delta_h\otimes\sigma$ identifies $\tr_G$ on $\mathscr{W}_n(F,X_i)$ with $\tr_{M_{|F|c_n(i)}(\mathbb{C})}\otimes\tau$.
Thus, for every positive $G$-equivariant operator $B$ on $\mathscr{W}_n(F,X_i)$,
\begin{equation}\label{eq:G-trace}
\tr_G B = \sum_{h\in F}\sum_{\sigma\in\Sigma_n(i)} \langle B(\delta_h\otimes\sigma),\delta_h\otimes\sigma\rangle.
\end{equation}
For $A\in M_{c_n(i)}(\RG)$, regard $q_n^{\smallscript{F},\stageindex{X_i}}(1\otimes A)q_n^{\smallscript{F},\stageindex{X_i}}$ as an operator on $\mathscr{W}_n(F,X_i)$.
The operators $q_n^{\smallscript{F},\stageindex{X_i}}$ and $1\otimes A$ both commute with $\mathsf{L}_n$, hence also with $\mathsf{E}_n$.
Then, for $A$ positive, \eqref{eq:G-trace} and \eqref{eqn:E_ne_h} give
\begin{equation}\label{eq:finite-module-traces}
\begin{aligned}
\tr_G
(q_n^{\smallscript{F},\stageindex{X_i}}(1\otimes A)q_n^{\smallscript{F},\stageindex{X_i}})
&=|F|(\tr_{M_{c_n(i)}(\mathbb{C})}\otimes\tau)(A),\\
\tr_G (q_n^{\smallscript{F},\stageindex{X_i}}(1\otimes A)\mathsf{E}_n q_n^{\smallscript{F},\stageindex{X_i}} )
&=\sum_{h\in F}(\tr_{M_{c_n(i)}(\mathbb{C})}\otimes\tau)(Ae_h).
\end{aligned}
\end{equation}

\section{The finite-stage argument}

Throughout this section we fix $n\geq0$ and $i\geq0$.
\begin{notations}\label{notation:section4}
Let $X\coloneqq X_i$.
Write $c_n\coloneqq c_n(i)$ and $c_{n-1}\coloneqq c_{n-1}(i)$.
Let $\tau_n\coloneqq\tr_{M_{c_n}(\mathbb{C})}\otimes\tau$.
For each $j\geq i$, let $p_j\in M_{c_n}(\RG)$ be the orthogonal projection of $C_n^{\twosup}(X)$ onto $V_n^j(X)$.
Write $q_m^{\smallscript{F}}\coloneqq q_m^{\smallscript{F},\stageindex{X}}$ for $m\geq-1$ and $F\subseteq G$ finite.
\end{notations}

\subsection{\texorpdfstring{Window cycles and $\mathsf{L}_n$-invariant vectors}{Window cycles and L\textunderscore n-invariant vectors}}

The windowed prism identity~\eqref{eq:prism-window} shows that a cycle in some $\mathscr{W}_n(F,X)$ and its image under $\mathsf{L}_n$ represent the same reduced homology class in the conjugation complex of a later stage.
The following proposition records this conclusion, 
first after projection by $1\otimes p_j$, and then after compression back to the window module by $q_n^{\smallscript{F}}$.

\begin{proposition}\label{prop:localized-prism}
Retain Notations~\ref{notation:section4}.
Let $F$ be a finite subset of $G$, and let $j\geq i$ be such that the windowed prism identity~\eqref{eq:prism-window} holds with $Y=X_j$.
\begin{enumerate}
\setlength{\itemsep}{0.5\baselineskip}

\item[(a)]
$(1\otimes p_j)(\mathscr{W}_n(F,X)\cap\ker(1\otimes\partial_n^{\stageindex{X}})) \subseteq \ker(\mathsf{L}_n-1)$.

\item[(b)]
$\overline{\operatorname{im}}(q_n^{\smallscript{F}}(1\otimes p_j)) \cap \ker(1\otimes\partial_n^{\stageindex{X}}) \subseteq \ker(\mathsf{L}_n-1)$. 
In particular, $1\otimes\partial_n^{\stageindex{X}}$ is injective on
\[
\overline{\operatorname{im}}(q_n^{\smallscript{F}}(1\otimes p_j))\cap\ker(\mathsf{L}_n-1)^\perp.
\]
\end{enumerate}
\end{proposition}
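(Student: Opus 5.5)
For part (a), I will take $w\in\mathscr{W}_n(F,X)$ with $(1\otimes\partial_n^{X})w=0$ and apply the windowed prism identity~\eqref{eq:prism-window}. The second term vanishes, which gives
\[
\mathsf{L}_nw-w=(1\otimes\partial_{n+1}^{Y})\mathsf{P}_nw,
\]
so $\mathsf{L}_nw-w$ lies in $\operatorname{im}(1\otimes\partial_{n+1}^{Y})$. Since $\mathsf{L}_n$ is a chain automorphism, $\mathsf{L}_nw$ is also an $n$-cycle of the conjugation complex of $X$.

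Next I use the Hodge decomposition of the conjugation complex of $Y$. The space of $n$-cycles splits as $\ell^2(G)\otimes\cH_n(Y)$ plus $\overline{\operatorname{im}}(1\otimes\partial^{Y}_{n+1})$. The map on reduced homology induced by $X\subseteq Y$ is $1\otimes T_n^{j,i}$. Concretely, it sends a cycle $z$ of $X$ to the orthogonal projection onto $\ell^2(G)\otimes\cH_n(Y)$, after first projecting $z$ to $\ell^2(G)\otimes\cH_n(X)$. Boundaries in $Y$ are killed, so
\[
(1\otimes T_n^{j,i})(1\otimes\pi_X)(\mathsf{L}_nw-w)=0,
\]
where $\pi_X$ is the projection onto $\cH_n(X)$. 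Hence $(1\otimes\pi_X)(\mathsf{L}_nw-w)\in\ell^2(G)\otimes\ker T_n^{j,i}$. Now $p_j$ is the projection onto $V_n^j(X)=\cH_n(X)\cap(\ker T_n^{j,i})^\perp$, and $p_j\le\pi_X$, so $p_j\pi_X=p_j$. Therefore
\[
(1\otimes p_j)(\mathsf{L}_nw-w)=0.
\]
By~\eqref{eq:L-intertwining}, $\mathsf{L}_n$ commutes with $1\otimes p_j$, since $p_j\in M_{c_n}(\RG)$. It follows that $\mathsf{L}_n(1\otimes p_j)w=(1\otimes p_j)w$, which proves (a). The main care needed here is checking that the reduced homology map really is ``project to harmonics in $Y$'' on cycles of $X$, and that boundaries in $Y$ project to zero. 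Both are standard facts from the identification of harmonic chains with reduced homology.

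For part (b), I decompose the window module. Let $K\coloneqq\mathscr{W}_n(F,X)\cap\ker(1\otimes\partial_n^X)$ and let $K'$ be its orthogonal complement inside $\mathscr{W}_n(F,X)$. Every $v$ in the image of $q_n^F(1\otimes p_j)$ has the form $v=q_n^F(1\otimes p_j)u$ for some $u$. Since $1\otimes p_j$ projects onto harmonic chains, $(1\otimes p_j)u$ is a cycle and lies in $\ell^2(G)\otimes V_n^j(X)$. The point is that compressing by $q_n^F$ could destroy the cycle property. So I will argue with adjoints instead: $\overline{\operatorname{im}}(q_n^F(1\otimes p_j))=\ker((1\otimes p_j)q_n^F)^\perp\cap\mathscr{W}_n(F,X)$. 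Take $v$ in this closure with $(1\otimes\partial_n^X)v=0$; then $v\in K$.

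I want to show that $(\mathsf{L}_n-1)v=0$. The operator $\mathsf{L}_n$ preserves $\mathscr{W}_n(F,X)$, preserves cycles, and commutes with $q_n^F$ and $1\otimes p_j$. So $(\mathsf{L}_n-1)v$ again lies in $K$ and in the closed image. By (a), $(1\otimes p_j)(\mathsf{L}_n-1)v=0$, so $(\mathsf{L}_n-1)v$ lies in $\ker((1\otimes p_j)q_n^F)$. It is therefore orthogonal to the closed image, while also belonging to it. Hence $(\mathsf{L}_n-1)v=0$.

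The \emph{in particular} statement follows immediately. Suppose $x$ lies in the closed image, is orthogonal to $\ker(\mathsf{L}_n-1)$, and satisfies $(1\otimes\partial_n^X)x=0$. Then $x\in\ker(\mathsf{L}_n-1)$ by what was just shown, so $x=0$. This adjoint/orthogonality trick is the step I expect to be the main obstacle, and I will verify each commutation used in it carefully.
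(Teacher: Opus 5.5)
Your proposal is correct and takes essentially the same approach as the paper. In (a) you use the windowed prism identity to exhibit $(\mathsf{L}_n-1)w$ as a boundary in the conjugation complex of $X_j$, place its $X$-harmonic part in $\ell^2(G)\otimes\ker T_n^{j,i}$, and conclude by the intertwining of $\mathsf{L}_n$ with $1\otimes p_j$; in (b), noting that $(\mathsf{L}_n-1)v$ lies both in $W=\overline{\operatorname{im}}(q_n^{F}(1\otimes p_j))$ and in $\ker((1\otimes p_j)q_n^{F})=W^\perp$ is exactly the paper's argument that $1\otimes p_j$ is injective on the $\mathsf{L}_n$-invariant space $W$.
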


\begin{proof}
(a): Fix $w\in\mathscr{W}_n(F,X)\cap\ker(1\otimes\partial_n^{\stageindex{X}})$, and let $\eta\coloneqq(\mathsf{L}_n-1)w$.
Applying \eqref{eq:prism-window} and using $(1\otimes\partial_n^{\stageindex{X}})w=0$ yields
\[
(1\otimes\partial_{n+1}^{\stageindex{X_j}})\mathsf{P}_nw = \eta.
\]
Since $\mathsf{L}_*$ is a chain automorphism of the conjugation complex of $X$, we see that $\eta\in\ker(1\otimes\partial_n^{\stageindex{X}})$, so $\eta$ represents a reduced homology class in this complex; 
let $\eta_0\in\ell^2(G)\otimes\cH_n(X)$ be its harmonic representative, i.e., the orthogonal projection of $\eta$ onto $\ell^2(G)\otimes\cH_n(X)$.
The displayed equality shows that $\eta_0$ maps to zero in the reduced homology of the conjugation complex of $X_j$, i.e., $\eta_0\in\ker(1\otimes T_n^{j,i})$, so $(1\otimes p_j)\eta_0=0$.
Since $\eta-\eta_0$ is orthogonal to $\ell^2(G)\otimes\cH_n(X)$, it is also orthogonal to $\ell^2(G)\otimes V_n^j(X)$.
Thus, $(1\otimes p_j)\eta=0$.
By \eqref{eq:L-intertwining},
\[
(\mathsf{L}_n-1)(1\otimes p_j)w = (1\otimes p_j)\eta =0.
\]

(b): Let $W\coloneqq\overline{\operatorname{im}}(q_n^{\smallscript{F}}(1\otimes p_j))$.
Since $(1\otimes p_j)q_n^{\smallscript{F}}$ acts as $1\otimes p_j$ on $W$, the identity $\ker((1\otimes p_j)q_n^{\smallscript{F}})=W^\perp$ implies that $1\otimes p_j$ is injective on $W$.
The unitary $\mathsf{L}_n$ commutes with $q_n^{\smallscript{F}}$ and $1\otimes p_j$, so $W$ is $\mathsf{L}_n$-invariant.
Given $w\in W\cap\ker(1\otimes\partial_n^{\stageindex{X}})$, since $W\subseteq\mathscr{W}_n(F,X)$, part~(a) implies
\[
(1\otimes p_j)(\mathsf{L}_n-1)w = (\mathsf{L}_n-1)(1\otimes p_j)w =0.
\]
Since $(\mathsf{L}_n-1)w\in W$, injectivity of $1\otimes p_j$ on $W$ implies that $(\mathsf{L}_n-1)w=0$.
\end{proof}

\subsection{The dimension bound}

Since $\partial_n^{\stageindex{X}}$ is a matrix over $\rho(\CG)$, the set
\begin{equation}\label{eq:Sdef}
S\coloneqq \{s\in G : s\text{ occurs with nonzero coefficient in some entry of }\partial_n^{\stageindex{X}}\}
\end{equation}
is finite.
Define the \emph{conjugation $S$-boundary} of a finite subset $F$ of $G$ to be the set
\[
\partial_S^{\mathrm{conj}}F \coloneqq \bigcup_{s\in S}(sFs^{-1}\mathbin{\triangle}F).
\]

\begin{lemma}\label{lem:cutoff-boundary}
Retain Notations~\ref{notation:section4}, and let $S$ be as in \eqref{eq:Sdef}.
For every finite subset $F\subseteq G$,
\[
\operatorname{im}((1\otimes\partial_n^{\stageindex{X}})q_n^{\smallscript{F}} - q_{n-1}^{\smallscript{F}} (1\otimes\partial_n^{\stageindex{X}})) \subseteq \mathscr{W}_{n-1}(\partial_S^{\mathrm{conj}}F,X).
\]
\end{lemma}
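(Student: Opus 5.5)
It suffices to check the inclusion on the orthonormal basis of $\ell^2(G)\otimes C_n^{\twosup}(X)$, since both operators are bounded and $G$-equivariant and the target is a closed subspace. The basis consists of the vectors $\delta_k\otimes g.\sigma$ with $k,g\in G$ and $\sigma\in\Sigma_n(i)$. Writing $k=ghg^{-1}$ with $h\coloneqq g^{-1}kg$, equivariance (the target window module is $G$-invariant) lets us reduce to vectors $\delta_h\otimes\sigma$ with $h\in G$ arbitrary and $\sigma\in\Sigma_n(i)$. The first step is to record which elementary tensors $\delta_k\otimes b$ (with $b\in\mathscr{B}_m(X)$) lie in $\mathscr{W}_m(F,X)$. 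Writing $b=g.\sigma'$ with $\sigma'\in\Sigma_m(i)$ uniquely (the action on $\mathscr{B}_m$ is free), we have $\delta_k\otimes g.\sigma'\in\mathscr{W}_m(F,X)$ iff $g^{-1}kg\in F$, and otherwise it is orthogonal to $\mathscr{W}_m(F,X)$. So $q_m^{\smallscript{F}}$ is diagonal in this basis, acting as the indicator of the condition $g^{-1}kg\in F$.

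Next, compute $(1\otimes\partial_n^{\stageindex{X}})(\delta_h\otimes\sigma)=\delta_h\otimes\partial_n\sigma$. The boundary $\partial_n\sigma$ is a finite combination of basis simplices $s.\sigma'$ with $\sigma'\in\Sigma_{n-1}(i)$ and $s\in S$. I expect this to be the key step: matching the definition of $S$ in \eqref{eq:Sdef} with the matrix convention, since the matrix entries are $\rho$ of group ring elements, and checking whether $s$ or $s^{-1}$ appears. If the convention produces $s^{-1}$ instead of $s$, the argument still works after replacing $S$ by $S^{-1}$. Indeed $sFs^{-1}\triangle F=s(F\triangle s^{-1}Fs)s^{-1}$, so we need only verify the containment with whichever convention arises and keep track of it. Concretely, a face of $\sigma=[1,g_1,\dots,g_n]$ is $[1,\dots]\in\Sigma_{n-1}$ if $r\ne0$, and is $[g_1,\dots,g_n]=g_1.[1,g_1^{-1}g_2,\dots]$ if $r=0$. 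So $s$ ranges over $1$ and the elements $g_1$, which are the elements appearing as $\rho$-coefficients up to inversion.

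Now evaluate the commutator on $\delta_h\otimes\sigma$. The term $(1\otimes\partial)q_n^{\smallscript{F}}$ gives $[h\in F]\sum_s \delta_h\otimes s.\sigma'_s$ with coefficients. The term $q_{n-1}^{\smallscript{F}}(1\otimes\partial)$ gives $\sum_s [s^{-1}hs\in F]\,\delta_h\otimes s.\sigma'_s$. The difference is therefore a combination of vectors $\delta_h\otimes s.\sigma'$ with coefficient $[h\in F]-[s^{-1}hs\in F]$. This coefficient is nonzero only if exactly one of $h\in F$ and $h\in sFs^{-1}$ holds, i.e. $h\in sFs^{-1}\triangle F$, equivalently $s^{-1}hs\in s^{-1}(sFs^{-1}\triangle F)s=F\triangle s^{-1}Fs$.

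Finally, note that $\delta_h\otimes s.\sigma'$ lies in $\mathscr{W}_{n-1}(E,X)$ iff $s^{-1}hs\in E$. So we need $F\triangle s^{-1}Fs\subseteq\partial_S^{\mathrm{conj}}F$, which holds by definition provided $s^{-1}\in S$ in the paper's convention. That is precisely the bookkeeping point flagged above. The case $s=1$ contributes nothing, since then $s^{-1}hs=h$ and the coefficient vanishes. Linearity, equivariance, and continuity then give the inclusion on all of $\ell^2(G)\otimes C_n^{\twosup}(X)$.
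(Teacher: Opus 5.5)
Your argument is correct. It is essentially the paper's proof written out on basis vectors: the paper expands $\partial_n^{\stageindex{X}}=\sum_{s\in S}B_s\rho_s$ and uses $(1\otimes\rho_s)\mathscr{W}_n(F,X)=\mathscr{W}_n(sFs^{-1},X)$ to write the commutator as $\sum_{s\in S}(q_{n-1}^{\smallscript{sFs^{-1}}}-q_{n-1}^{\smallscript{F}})(1\otimes B_s\rho_s)$. That is your coefficient $[h\in F]-[t^{-1}ht\in F]$ in operator form.

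The convention point you left open should be checked, not deferred, and it resolves in your favor. Identify $g.\sigma_u$ with $\delta_g e_u$ and use $\rho_g\delta_h=\delta_{hg^{-1}}$. Then a term $t.\sigma'$ in $\partial_n\sigma$ (your $s.\sigma'$) contributes $\rho_{t^{-1}}$ to the corresponding matrix entry, because $\rho_{t^{-1}}\delta_1=\delta_t$. Hence $t^{-1}\in S$, and $F\mathbin{\triangle}t^{-1}Ft\subseteq\partial_S^{\mathrm{conj}}F$, exactly as you need.

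Your proposed fallback of replacing $S$ by $S^{-1}$ would not prove the lemma as stated. In general $\partial_{S^{-1}}^{\mathrm{conj}}F$ differs from $\partial_S^{\mathrm{conj}}F$, because $S$ need not be symmetric. Either version would suffice for Theorem~\ref{thm:main}, but only the second is the statement you were asked to prove.
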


\begin{proof}
By the definition of $S$, we may write $\partial_n^{\stageindex{X}}=\sum_{s\in S}B_s\rho_s$ with $B_s\in M_{c_{n-1}\times c_n}(\mathbb{C})$ for each $s\in S$.
The identity $(1\otimes\rho_s)\mathscr{W}_n(F,X)=\mathscr{W}_n(sFs^{-1},X)$, together with the fact that $B_s$ has scalar entries, gives
\[
(1\otimes B_s\rho_s)q_n^{\smallscript{F}} 
= (1\otimes B_s) q_n^{\smallscript{sFs^{-1}}} (1\otimes\rho_s)
= q_{n-1}^{\smallscript{sFs^{-1}}} (1\otimes B_s\rho_s)
\]
for each $s\in S$.
It follows that
\[
(1\otimes\partial_n^{\stageindex{X}}) q_n^{\smallscript{F}} - q_{n-1}^{\smallscript{F}} (1\otimes\partial_n^{\stageindex{X}}) 
= \sum_{s\in S}(q_{n-1}^{\smallscript{sFs^{-1}}}- q_{n-1}^{\smallscript{F}}) (1\otimes B_s\rho_s).
\]
The term indexed by $s$ in the sum has image contained in $\mathscr{W}_{n-1}(sFs^{-1}\mathbin{\triangle}F,X)$, which is contained in the subspace $\mathscr{W}_{n-1}(\partial_S^{\mathrm{conj}}F,X)$.
\end{proof}

\begin{proposition}\label{prop:finite-stage-rank-estimate}
Retain Notations~\ref{notation:section4}, and let $S$ be as in \eqref{eq:Sdef}.
Let $F\subseteq G$ be finite and nonempty.
There exists $j_0\geq i$ such that for all $j\geq j_0$,
\[
\dim_G V_n^j(X) 
\overset{\spadesuit}{\leq} \frac{1}{|F|} \sum_{h\in F}\tau_n(p_je_h) + c_{n-1}\frac{|\partial_S^{\mathrm{conj}}F|}{|F|}
\overset{\clubsuit}{\leq} \frac{c_n}{|F|} + 4c_{n-1}\frac{|\partial_S^{\mathrm{conj}}F|}{|F|}.
\]
\end{proposition}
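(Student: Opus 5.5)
Choose $j_0\geq i$ so that the windowed prism identity \eqref{eq:prism-window} holds with $Y=X_{j_0}$ (Proposition~\ref{prop:prism-window}). We check that for $j\geq j_0$ the hypotheses of Proposition~\ref{prop:localized-prism} still hold. Because $\mathsf{P}_n(\mathscr{W}_n(F,X))\subseteq\ell^2(G)\otimes C_{n+1}^{\twosup}(X_{j_0})\subseteq\ell^2(G)\otimes C_{n+1}^{\twosup}(X_j)$, the identity persists with $Y=X_j$. (Alternatively, use $V_n^j\subseteq V_n^{j_0}$, which follows from $T^{j,i}=T^{j,j_0}T^{j_0,i}$; the estimate is monotone in $p_j$.)

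\textbf{Step 1: split the compression.} Fix $j\geq j_0$. Consider the positive operator $Q\coloneqq q_n^{F}(1\otimes p_j)q_n^{F}$ on $\mathscr{W}_n(F,X)$. By \eqref{eq:finite-module-traces}, $\tr_G Q=|F|\tau_n(p_j)=|F|\dim_G V_n^j(X)$. Since $\mathsf{E}_n$ commutes with $q_n^F$ and with $1\otimes p_j$, we can write $Q=Q\mathsf{E}_n+Q(1-\mathsf{E}_n)$, a sum of two positive commuting pieces. The second formula in \eqref{eq:finite-module-traces} gives $\tr_G(Q\mathsf{E}_n)=\sum_{h\in F}\tau_n(p_je_h)$. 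For the remaining piece, $\tr_G(Q(1-\mathsf{E}_n))\leq\|Q\|\,\dim_G W'\leq\dim_G W'$, where
\[
W'\coloneqq\overline{\operatorname{im}}(q_n^F(1\otimes p_j))\cap\ker(\mathsf{L}_n-1)^\perp .
\]
This holds because $\overline{\operatorname{im}}(Q(1-\mathsf{E}_n))\subseteq W'$ and $0\leq Q\leq1$. So $\spadesuit$ reduces to the bound $\dim_G W'\leq c_{n-1}|\partial^{\mathrm{conj}}_SF|$.

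\textbf{Step 2: bound $\dim_G W'$ (the main obstacle).} By Proposition~\ref{prop:localized-prism}(b), $1\otimes\partial_n^{X}$ is injective on $W'$. The plan is to find an injective map into a small module. Let $F'\coloneqq\partial^{\mathrm{conj}}_SF$. We first claim $(1\otimes\partial_n^X)W'\subseteq\mathscr{W}_{n-1}(F,X)+\mathscr{W}_{n-1}(F',X)$. Indeed, for $w\in W'\subseteq\mathscr{W}_n(F,X)$ we have $w=q_n^Fw$, so Lemma~\ref{lem:cutoff-boundary} gives $(1\otimes\partial_n^X)w\in q_{n-1}^F(\cdots)+\mathscr{W}_{n-1}(F',X)$. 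The first summand alone is too large, so we need a better construction. Set $\Phi\coloneqq(1-q_{n-1}^F)(1\otimes\partial_n^X)$ on $W'$, which lands in $\mathscr{W}_{n-1}(F',X)$, a module of dimension $|F'|c_{n-1}$. It remains to show that $\ker\Phi\cap W'$ contributes nothing. If $w\in W'$ satisfies $(1\otimes\partial_n^X)w\in\mathscr{W}_{n-1}(F,X)$, we would like to conclude $w=0$ by some orthogonality or cycle argument, but this is unclear. As a fallback, we use the rank bound from the map $q_{n-1}^F(1\otimes\partial_n^X)q_n^F$ together with additivity, $\dim_G W'\leq\dim_G\overline{\operatorname{im}}\Phi+\dim_G(\ker\Phi\cap W')$, and then control $\ker\Phi\cap W'$ by showing that such $w$ are cycles modulo an error supported on $F'$. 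The looser constant $4$ in $\clubsuit$ suggests that the intended proof of $\spadesuit$ is sharp, so this step will need the precise form of Lemma~\ref{lem:cutoff-boundary} applied to both $q^F$ and its complement. I expect this to be the delicate point.

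\textbf{Step 3: prove $\clubsuit$.} For $h$ of infinite order, $e_h=0$. For torsion $h$, we have $\tau_n(p_je_h)\leq\tau_n(e_h)=c_n/\operatorname{ord}(h)$. Write $F_{\mathrm{tor}}$ for the set of torsion elements of $F$. The plan is to bound $\sum_{h\in F_{\mathrm{tor}}}\tau_n(p_je_h)$ by $c_n+3c_{n-1}|F'|$. To do this, we partition $F$ into cyclic-subgroup pieces $\langle h\rangle$-cosets, compare with conjugation by the elements of $S$, and use $\tau(e_h)=1/\operatorname{ord}(h)$. I am least sure of this step, and it would be settled after Step 2 fixes the exact form of the estimate.
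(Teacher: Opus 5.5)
Your Step 1 is correct and matches the paper's splitting of $q_n^{F}(1\otimes p_j)q_n^{F}$ along $\mathsf{E}_n$; your choice of $j_0$ is also fine. Steps 2 and 3, however, each contain a genuine gap.

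\textbf{Step 2.} You never prove $\dim_G W'\leq c_{n-1}|\partial_S^{\mathrm{conj}}F|$. The trouble comes from applying Lemma~\ref{lem:cutoff-boundary} to $w$ itself, which is not a cycle. The missing observation is that $W'$ lies in the closure of compressions of \emph{cycles}. Since $V_n^j(X)\subseteq\cH_n(X)\subseteq\ker\partial_n^{X}$, one has $\partial_n^{X}p_j=0$. Hence $q_{n-1}^{F}(1\otimes\partial_n^{X})(1\otimes p_j)=0$, and so
\[
(1\otimes\partial_n^{X})\,q_n^{F}(1\otimes p_j)=\bigl((1\otimes\partial_n^{X})q_n^{F}-q_{n-1}^{F}(1\otimes\partial_n^{X})\bigr)(1\otimes p_j).
\]
By the lemma, the image of this operator lies in $\mathscr{W}_{n-1}(\partial_S^{\mathrm{conj}}F,X)$. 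By continuity, $1\otimes\partial_n^{X}$ then maps all of $\overline{\operatorname{im}}(q_n^{F}(1\otimes p_j))$, and hence $W'$, into this module. It is injective on $W'$ by Proposition~\ref{prop:localized-prism}(b), so monotonicity of $\dim_G$ under injective morphisms gives the bound. No auxiliary map $\Phi$ or analysis of its kernel is needed; your fallback, as written, does not produce any bound.

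\textbf{Step 3.} This is headed the wrong way: no argument that discards $p_j$ via $\tau_n(p_je_h)\leq\tau_n(e_h)=c_n/\operatorname{ord}(h)$ can work. The inequality $\clubsuit$ is equivalent to $\sum_{h\in F}\tau_n(p_je_h)\leq c_n+3c_{n-1}|\partial_S^{\mathrm{conj}}F|$. Take $G=\bigoplus_{\mathbb{N}}\mathbb{Z}/2$ and $F$ a set of $N$ nontrivial elements. Then $\partial_S^{\mathrm{conj}}F=\varnothing$, so conjugation by $S$ carries no information. Yet $\sum_{h\in F}\tau_n(e_h)=Nc_n/2$, which exceeds $c_n$ once $N\geq3$ and $c_n>0$. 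So you must use that $p_j$ is small, and the only such information available is $\spadesuit$ itself.

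The paper's argument is a bootstrap.
\begin{enumerate}
\item From $e_h\leq(1+\operatorname{Re}\rho_h)/2$, average over $F$ and apply the positive functional $a\mapsto\tau_n(p_ja)$. With $a_F=|F|^{-1}\sum_{h\in F}\rho_h$, this gives $\frac{1}{|F|}\sum_{h\in F}\tau_n(p_je_h)\leq\frac12\tau_n(p_j)+\frac12\operatorname{Re}\tau_n(p_ja_F)$.
\item Since $\tau_n(a_F^*a_F)=c_n/|F|$, expanding $\tau_n((p_j-a_F)^*(p_j-a_F))\geq0$ yields $2\operatorname{Re}\tau_n(p_ja_F)\leq\tau_n(p_j)+c_n/|F|$.
\item Combining, $\frac{1}{|F|}\sum_{h\in F}\tau_n(p_je_h)\leq\frac34\dim_G V_n^j(X)+\frac{c_n}{4|F|}$.
\item Write $\theta_F$ for the middle term in the statement. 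Bounding $\dim_G V_n^j(X)$ by $\theta_F$ via $\spadesuit$ gives $\theta_F\leq\frac34\theta_F+\frac{c_n}{4|F|}+c_{n-1}|\partial_S^{\mathrm{conj}}F|/|F|$.
\end{enumerate}
Rearranging the last inequality is exactly $\clubsuit$, and this is where the constant $4$ comes from.
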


\begin{proof}
Choose $j_0\geq i$ as in Proposition~\ref{prop:prism-window}.
Then the windowed prism identity~\eqref{eq:prism-window} holds with $Y=X_j$ for every $j\geq j_0$.
Fix $j\geq j_0$.

We begin with the inequality~$\spadesuit$.
Let
\[
W\coloneqq\overline{\operatorname{im}}(q_n^{\smallscript{F}}(1\otimes p_j)) .
\]
Let $p_W$ be the orthogonal projection of $\mathscr{W}_n(F,X)$ onto $W$.
The unitary $\mathsf{L}_n$ commutes with both $q_n^{\smallscript{F}}$ and $1\otimes p_j$, and hence with $p_W$.
Therefore, $\mathsf{E}_n$ commutes with all three projections as well.

Since $V_n^j(X)\subseteq\cH_n(X)\subseteq\ker(\partial_n^{\stageindex{X}})$, we have $\partial_n^{\stageindex{X}}p_j=0$.
Applying Lemma~\ref{lem:cutoff-boundary}, we obtain
\[
\begin{aligned}
\operatorname{im}((1\otimes\partial_n^{\stageindex{X}})q_n^{\smallscript{F}}(1\otimes p_j))
&= \operatorname{im}\big(((1\otimes\partial_n^{\stageindex{X}})q_n^{\smallscript{F}} - q_{n-1}^{\smallscript{F}}(1\otimes\partial_n^{\stageindex{X}}))(1\otimes p_j) \big)\\
&\subseteq \mathscr{W}_{n-1}(\partial_S^{\mathrm{conj}}F,X).
\end{aligned}
\]
Since $1\otimes\partial_n^{\stageindex{X}}$ is continuous and $\mathscr{W}_{n-1}(\partial_S^{\mathrm{conj}}F,X)$ is closed,
\[
(1\otimes\partial_n^{\stageindex{X}})(W) \subseteq \mathscr{W}_{n-1}(\partial_S^{\mathrm{conj}}F,X).
\]
Thus, by Proposition~\ref{prop:localized-prism}(b), the operator $1\otimes\partial_n^{\stageindex{X}}$ injectively carries $W\cap\ker(\mathsf{L}_n-1)^\perp$ into $\mathscr{W}_{n-1}(\partial_S^{\mathrm{conj}}F,X)$.
Since $p_W(1-\mathsf{E}_n)$ is the orthogonal projection onto $W\cap\ker(\mathsf{L}_n-1)^\perp$,
\cite[Theorem~1.12(2)]{Luck} gives
\[ 
\tr_G (p_W(1-\mathsf{E}_n)) =\dim_G(W\cap\ker(\mathsf{L}_n-1)^\perp) \leq\dim_G \mathscr{W}_{n-1}(\partial_S^{\mathrm{conj}}F,X) = c_{n-1}|\partial_S^{\mathrm{conj}}F|.
\]
The positive contraction $q_n^{\smallscript{F}}(1\otimes p_j)q_n^{\smallscript{F}}$ has image contained in $W$, so it is bounded above by $p_W$.
Equation~\eqref{eq:finite-module-traces} then gives
\begin{align*}
\dim_G V_n^j(X)=\tau_n(p_j)
&= \frac{1}{|F|} \tr_G (q_n^{\smallscript{F}}(1\otimes p_j)q_n^{\smallscript{F}} )\\
&\leq \frac{1}{|F|}\tr_G (q_n^{\smallscript{F}}(1\otimes p_j)\mathsf{E}_n q_n^{\smallscript{F}}) + \frac{1}{|F|} \tr_G (p_W(1-\mathsf{E}_n))\\
&\leq \frac{1}{|F|} \sum_{h\in F}\tau_n(p_je_h) + c_{n-1}\frac{|\partial_S^{\mathrm{conj}}F|}{|F|}.
\end{align*}

We now prove the inequality~$\clubsuit$.
Let $\theta_F$ be its left-hand side, i.e.,
\[
\theta_F\coloneqq \frac{1}{|F|}\sum_{h\in F}\tau_n(p_je_h) + c_{n-1}\frac{|\partial_S^{\mathrm{conj}}F|}{|F|}.
\]
Let $a_F\coloneqq|F|^{-1}\sum_{h\in F}\rho_h$.
Then
\[
\tau_n(a_F^*a_F) = c_n\tau(a_F^*a_F) = \frac{c_n}{|F|^2}\sum_{g,h\in F}\tau(\rho_{g^{-1}h}) = \frac{c_n}{|F|}.
\]
Since $\tau_n((p_j-a_F)^*(p_j-a_F))\geq0$, we have
\begin{equation}\label{eq:Re_tau_n}
2\operatorname{Re}\tau_n(p_ja_F) \leq \tau_n(p_j)+\tau_n(a_F^*a_F) = \dim_G V_n^j(X)+\frac{c_n}{|F|}.
\end{equation}
For each $h\in G$, the operator $(1+\operatorname{Re}(\rho_h))/2$ is positive, commutes with $e_h$, and restricts to the identity on $\operatorname{im}(e_h)$.
Thus, $e_h\leq(1+\operatorname{Re}(\rho_h))/2$.
Averaging this inequality over $h\in F$, applying the positive functional $a\mapsto\tau_n(p_ja)=\tau_n(p_jap_j)$, and then using \eqref{eq:Re_tau_n}, we obtain
\[
\frac{1}{|F|}\sum_{h\in F}\tau_n(p_je_h)
\leq \frac{1}{2}\dim_G V_n^j(X)+\frac{1}{2}\operatorname{Re}\tau_n(p_ja_F)
\leq \frac{3}{4}\dim_G V_n^j(X)+\frac{c_n}{4|F|}.
\]
Adding the boundary term in the definition of $\theta_F$, and then applying~$\spadesuit$, we obtain
\begin{align*}
\theta_F
&\leq \frac{3}{4}\dim_G V_n^j(X) + \frac{c_n}{4|F|} + c_{n-1}\frac{|\partial_S^{\mathrm{conj}}F|}{|F|}
\leq \frac{3}{4}\theta_F + \frac{c_n}{4|F|} + c_{n-1}\frac{|\partial_S^{\mathrm{conj}}F|}{|F|}.
\end{align*}
Therefore, $\theta_F\leq c_n/|F|+4c_{n-1}|\partial_S^{\mathrm{conj}}F|/|F|$, which proves~$\clubsuit$.
\end{proof}

\section{Inner amenability and vanishing}

\begin{proof}[Proof of Theorem~\ref{thm:main}]
Fix $n\geq0$ and $i\geq0$, and choose a sequence $(F_k)_{k\geq0}$ of finite subsets of $G$ witnessing inner amenability of $G$.
Let $S$ be given by \eqref{eq:Sdef}.
Proposition~\ref{prop:finite-stage-rank-estimate} gives
\[
\inf_{j\geq i} \dim_G V_n^j(X_i) \leq \frac{c_n(i)}{|F_k|} + 4c_{n-1}(i)\frac{|\partial_S^{\mathrm{conj}}F_k|}{|F_k|} \xrightarrow{\,k\to\infty \,} 0,
\]
since $|F_k|\to\infty$ and $|\partial_S^{\mathrm{conj}}F_k|/|F_k|\to0$.
Since $i$ and $n$ were arbitrary, Proposition~\ref{prop:finite-stage-dimension} shows that $\beta_n^{\twosup}(G)=0$ for every $n\geq0$.
\end{proof}

\end{document}